\documentclass[submission]{FPSAC2022}
\usepackage{pack}

\title{A description of the minimal elements of Shi regions in classical Weyl Groups}

\author{Balthazar Charles\addressmark{1}\thanks{\href{mailto:balthazar.charles@universite-paris-saclay.fr}{balthazar.charles@universite-paris-saclay.fr}}}

\address{\addressmark{1}Université Paris-Saclay, CNRS, Laboratoire Interdisciplinaire des Sciences du Numérique, 91405, Orsay, France}

\abstract{In this extended abstract, we show how a bijection between parking functions and regions of the Shi arrangement from [Athanasiadis, Linusson '99] (in type $A_n$) and [Armstrong, Reiner, Rhoades '15] (in type $B_n, C_n, D_n$) allows for the computation of the minimal elements of the Shi regions. This gives a combinatorial interpretation of these minimal elements: they can be seen as counting non-crossing arcs in non-nesting arc diagrams.}

\keywords{Parking functions, Shi regions, classical Weyl groups, non-nesting partitions.}

\received{\today}

\usepackage[backend = bibtex]{biblatex}
\addbibresource{main.bib}

\begin{document}

\maketitle

The Shi arrangements are a well studied subject in algebraic combinatorics, and their investigation has generated a number of results of bijective nature (for instance \cite{fishel2010bijection}, \cite{levear2020bijection}, \cite{tzanaki2015bijections}).
Recently, a push has been made to understand the minimal elements of Shi regions in connection with \cite[Conjecture 2]{dyer2016small} from Dyer \& Hohlweg. In the case of a crystallographic root system $\Phi$, 
Shi gives in \cite{JYS1} a description of the elements of the affine Weyl group associated to $\Phi$ as vectors in $\ZZ^{\Phi^+}$.
Shi \cite{JYS2} also proves in the crystallographic case that the Shi regions can be described by a so called \emph{sign type} and that they have an unique minimal element. Given these two results, we ask ourselves how to compute the minimal element of a given sign type.

In \cite{athanasiadis1999simple}, Athanasiadis \& Linusson give a simple bijection between \emph{parking functions} and Shi regions of type $A_n$, specifically as described by their sign type. In \cite{reiner1997non}, Armstrong, Reiner \& Rhoades extend this bijection to all crystallographic root systems.
We show in this extended abstract that this bijection can, in the classical type $A_n, B_n, C_n, D_n$ be used to precisely describe the minimal element of each Shi region. This description is essentially combinatorial: in the simplest case $A_n$, if the parking function is described as a permutation together with a non-nesting partition, the coefficient of $e_i-e_j$ is the number of non-crossing arcs between the values $i$ and $j$.

Although we assume some familiarity with crystallographic root systems and Weyl groups, we give some light background in Section \ref{sec:shi} to fix the notations. In Section \ref{sec:A} we describe the minimal elements in type $A_n$ with a detailed proof. Finally, in Section \ref{sec:gen}, we discuss how to extend the result in type $B_n, C_n, D_n$ with sketches of proofs.


\section{Affine Weyl groups and Shi regions.}\label{sec:shi}

\subsection{Affine Weyl Groups}\label{sub:affine}
Let $V$ be an Euclidean space with inner product $\langle \cdot \sep \cdot \rangle$. Let $\Phi$ be an irreducible crystallographic root system in $V$. Additionally, we suppose that $\Phi$ spans $V$.
For $\alpha \in \Phi$, $k \in \ZZ$, consider the orthogonal affine reflections:
\[s_{\alpha, k} = x \longmapsto x - 2\left(\langle x \sep \alpha \rangle -k\right)\frac{\alpha}{\langle \alpha, \alpha \rangle}.\]
The \emph{Weyl group} $W$ (resp. \emph{affine Weyl group} $\widetilde{W}$) associated with $\Phi$ is the group generated by $\{s_{\alpha, 0} \sep \alpha \in \Phi\}$ (resp. $\{s_{\alpha, k} \sep \alpha \in \Phi, k \in \ZZ\}$).

Let $f$ be a linear form such that $\Phi\cap\ker f = \emptyset$. This gives a choice of \emph{positive roots} $\Phi^+ = \Phi \cap f\inv\{\RR^+\}$ and of \emph{simple roots}, defined as the roots in $\Phi^+$ that generate the extreme rays of $\cone(\Phi^+) = \sum_{\alpha \in \Phi^+} \RR^+\alpha$. We denote the set of simple roots by $\Delta$.
Because we supposed that $\Phi$ spans $V$, $\Delta$ is a basis of $V$.

Since $\Phi$ is crystallographic, it has the property that $\Phi^+ \subset \NN\Delta$. This allows for the definition of the \emph{root poset} on $(\Phi^+, \leq)$ where for $\alpha, \beta \in \Phi^+$ we say $\alpha \leq \beta$ if $\beta - \alpha \in \NN\Delta$. This poset is graded by the \emph{height} function $h: \Phi^+ \longrightarrow \NN$ where $h(\alpha)$ is the sum of the coefficients in the (unique) decomposition of $\alpha$ over $\Delta$.
The root poset has a unique maximal element called the \emph{highest root} that will be denoted by $\alpha_0$.

\subsection{Alcoves and Shi relations}\label{sub:shi_rel}
Consider the collections of affine hyperplanes (respectively, half-spaces):
\[H_{\alpha, k} = \{x \in V \sep \langle \alpha \sep x \rangle = k\}, 
\quad H^+_{\alpha, k} = \{x \in V \sep \langle \alpha \sep x \rangle > k\},
\quad H^-_{\alpha, k} = \{x \in V \sep \langle \alpha \sep x \rangle < k\}\]
for $\alpha \in \Phi^+$ and $k \in \ZZ$. We denote the arrangement of the $H_{\alpha, k}$ hyperplanes by:
\[\mathcal{A} = \bigcup_{\alpha \in \Phi^+, k \in \ZZ} H_{\alpha, k}.\]
The connected components of $V \setminus \mathcal{A}$ are called \emph{alcoves}. The \emph{fundamental chamber} and the \emph{fundamental alcove} are, respectively:
\[C = \bigcap_{\alpha \in \Delta} H^+_{\alpha, 0},\qquad A_e = C \cap H^-_{\alpha_0, 1}.\]
It is well known that the set of alcoves is in bijection with the affine Weyl Group $\widetilde{W}$, the bijection being $w \mapsto wA_e$. Given $w \in \widetilde{W}$ and its corresponding alcove $A_w$, define:
\[K(w) = (k(w, \alpha))_{\alpha \in \Phi^+} \textrm{ where } k(w, \alpha) = \max\{i \in \ZZ \sep A_w \subset H^+_{\alpha, i}\}.\]
The map $K$ is an injection of $\widetilde{W}$ in $\ZZ^{\Phi^+}$ as it essentially is a description of the so-called \emph{inversion set} of $w$ (see \cite{JYS1}). We are interested in the elements of $\ZZ^{\Phi^+}$ of the form $K(w)$ which we call \emph{Shi vectors}. Shi vectors are characterized by Shi: 

\begin{thm}\cite[Theorem 1.1]{JYS3}\label{thm:shi_relations}
	Consider $v \in \ZZ^{\Phi^+}$. There exists some $w \in \widetilde{W}$ such that $K(w) = v$ if and only if, for all $\alpha, \beta, \gamma \in \Phi^+$ such that $\alpha + \beta = \gamma$, we have $v_{\alpha} + v_{\beta} + \varepsilon_{\alpha, \beta} = v_{\gamma}$ for some $\varepsilon_{\alpha, \beta} \in \{0, 1\}$. We call these equations the \emph{Shi relations}.
\end{thm}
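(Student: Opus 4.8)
The plan is to prove the two implications separately, realizing the vector $v$ geometrically through the open region
\[
R_v = \{x \in V : v_\alpha < \langle \alpha, x \rangle < v_\alpha + 1 \text{ for all } \alpha \in \Phi^+\}.
\]
For the necessity of the Shi relations, I would suppose $v = K(w)$ and pick any $x \in A_w$. By definition of $K$ and the fact that $A_w$ meets no hyperplane of $\mathcal{A}$, the quantity $t_\alpha := \langle \alpha, x\rangle - v_\alpha$ lies in the open interval $(0,1)$ for every $\alpha \in \Phi^+$. If $\alpha + \beta = \gamma$, linearity of $\langle \cdot, x\rangle$ gives $t_\alpha + t_\beta - t_\gamma = v_\gamma - v_\alpha - v_\beta$; the left-hand side lies in $(-1,2)$ while the right-hand side is an integer, so $v_\gamma - v_\alpha - v_\beta \in \{0,1\}$. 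This is exactly the asserted relation, with $\varepsilon_{\alpha,\beta} = v_\gamma - v_\alpha - v_\beta$.

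For the converse, I would first argue that $R_v$ is always either empty or a single alcove. It is an intersection of open half-spaces, hence open and convex, and by construction $\langle\alpha, x\rangle$ is never an integer on $R_v$, so $R_v$ is disjoint from every hyperplane of $\mathcal{A}$. A connected set disjoint from $\mathcal{A}$ lies inside one alcove $A_w$, and on $A_w$ each $\langle\alpha,\cdot\rangle$ already takes its values in a single unit interval, which must then be $(v_\alpha,v_\alpha+1)$; this forces $R_v = A_w$ and $K(w) = v$. Thus the whole content of the converse is that the Shi relations imply $R_v \neq \emptyset$. To handle this I would parametrize $V$ by the coordinates $s_i = \langle\alpha_i,x\rangle - v_{\alpha_i}$ indexed by $\Delta$, which is legitimate since $\Delta$ is a basis and $\langle\cdot,\cdot\rangle$ is nondegenerate. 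Writing $\alpha = \sum_i c_i\alpha_i$, the fractional part $t_\alpha$ becomes the affine function $\sum_i c_i s_i + m_\alpha$ with $m_\alpha = \sum_i c_i v_{\alpha_i} - v_\alpha \in \ZZ$, so nonemptiness of $R_v$ reduces to finding $s \in (0,1)^\Delta$ with $\sum_i c_i s_i + m_\alpha \in (0,1)$ for every $\alpha\in\Phi^+$.

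I would prove that such an $s$ exists by induction on the height $h(\alpha)$, imposing the constraints in increasing height order and using that every root of height at least two has the form $\gamma = \alpha + \alpha_i$ with $h(\alpha) = h(\gamma)-1$: the relation $\varepsilon_{\alpha,\alpha_i}\in\{0,1\}$ is precisely what places the target slab $(v_\gamma,v_\gamma+1)$ inside the length-two interval swept by $\langle\alpha,x\rangle + \langle\alpha_i,x\rangle$. The main obstacle is exactly this induction, and it is genuine: a given $\gamma$ typically admits several decompositions $\gamma = \alpha+\beta$, and after narrowing the feasible polytope to accommodate all lower roots one must still check that $\langle\gamma,\cdot\rangle$ does not avoid its target slab. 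This is where the full force of Theorem \ref{thm:shi_relations} is used, since global consistency of the carries — not merely their individual admissibility — is what keeps the intersection nonempty. I would attempt this either by tracking directly that each new inequality cuts the already-nonempty polytope along a facet it genuinely meets, or, if a clean direct argument resists, by invoking Helly's theorem in dimension $\dim V$ to reduce global feasibility to that of bounded-size subsystems, each governed by a sub-root-system of small rank.
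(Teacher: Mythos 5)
First, note that the paper does not prove this statement at all: it is imported verbatim as \cite[Theorem 1.1]{JYS3}, so there is no internal proof to compare against. Judged on its own terms, your proposal correctly and completely handles the forward implication: if $v=K(w)$, then for $x\in A_w$ each $t_\alpha=\langle\alpha,x\rangle-v_\alpha$ lies in $(0,1)$, and for $\alpha+\beta=\gamma$ the integer $v_\gamma-v_\alpha-v_\beta=t_\alpha+t_\beta-t_\gamma$ lies in the open interval $(-1,2)$, hence in $\{0,1\}$. The reduction of the converse to the nonemptiness of the slab intersection $R_v$ is also sound (a nonempty convex set avoiding every hyperplane of $\mathcal{A}$ sits inside a single alcove, which then forces $R_v=A_w$ and $K(w)=v$).

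The gap is that the nonemptiness of $R_v$ --- the entire content of the converse, as you yourself say --- is not actually proved. The height induction as sketched does not close: when you impose the constraint $v_\gamma<\langle\gamma,x\rangle<v_\gamma+1$, the relation for a single decomposition $\gamma=\alpha+\alpha_i$ only places the target slab inside the length-two interval swept by $t_\alpha+t_{\alpha_i}$ over the \emph{unconstrained} box; it does not show that the slab meets the polytope already cut down by all lower-height constraints, and a root of height $\geq 2$ generally admits several decompositions whose carries must be globally compatible. The Helly fallback does not obviously repair this: Helly reduces the problem to feasibility of $(\dim V+1)$-element subfamilies of slabs, but such a subfamily can involve roots spanning all of $V$ and is not governed by a rank-two subsystem, so its feasibility is not directly read off from the triple relations --- one would again need essentially the full global statement. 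This missing step is precisely the substance of Shi's theorem (his proof in the cited reference is a genuinely nontrivial induction), so the proposal should be regarded as proving only the easy direction.
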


\begin{rmk}
	We want to bring the attention of the reader to the fact that in Shi's original formulation given as reference, the condition on $\alpha, \beta, \gamma$ in the previous theorem is $\alpha^{\vee} + \beta^{\vee} = \gamma^{\vee}$ where for some non zero vector $v \in V$, $v^{\vee}$ is defined as $2v/\langle v \sep v \rangle$. However, this is because the $H_{\alpha, k}$ are instead defined as $\{x \in V \sep \langle \alpha^{\vee} \sep x \rangle = 0\}$. In our convention, the successive applications of $\alpha \mapsto {\alpha}^{\vee}$ "cancel out", giving this formulation.
\end{rmk}

\subsection{Shi arrangement and Shi regions}\label{sub:regions}
\begin{dftn}
	We denote by $\mathcal{A}_1$ the \emph{Shi arrangement} defined by:
	\[\mathcal{A}_1 = \bigcup_{\alpha \in \Phi^+} (H_{\alpha, 0} \cup H_{\alpha, 1}).\]
	The connected components of $V \setminus \mathcal{A}_1$ are called the \emph{Shi regions}.
\end{dftn}

The Shi arrangement $\mathcal{A}_1$ being a sub-arrangement of $\mathcal{A}$ which defines the alcoves, for a Shi region $R$ we have $\overline{R} = \bigcup_{A_w \cap R \neq \emptyset} \overline{A_w}$: said more loosely, a Shi region is a union of alcoves. Using this fact, for $w \in \widetilde{W}$ we (abusively) write that $w \in R$ to mean $A_w \subset R$.

Setting, for a real number $x$, $\sign(x) = -$ if $x < 0$, $\sign(x) = 0$ if $x = 0$ and $\sign(x) = +$ if $x > 0$, we can define $\sign(w)$ for $w \in \widetilde{W}$ as $(\sign(k(w, \alpha)))_{\alpha \in \Phi^+}$. We have the following (see \cite{JYS2} for a discussion):
\begin{prop}
	Let $R$ be a Shi region. The value of $\sign(w)$ is constant for all $w \in R$. The \emph{sign type} of $R$, denoted by $\sign(R)$, is defined as $\sign(w)$ for any $w \in R$. The map $R \mapsto \sign(R)$ is an injection of the set of Shi regions in $\{-,0,+\}^{\Phi^+}$.
\end{prop}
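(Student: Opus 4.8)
The plan is to reduce the statement to a coordinate-by-coordinate (that is, root-by-root) analysis, exploiting that the only information $\sign(w)$ retains is, for each $\alpha \in \Phi^+$, the position of $A_w$ relative to the two Shi hyperplanes $H_{\alpha,0}$ and $H_{\alpha,1}$. The starting observation is that since an alcove $A_w$ meets no hyperplane $H_{\alpha,k}$, the quantity $\langle \alpha \sep x\rangle$ for $x \in A_w$ stays inside a single open interval $(m,m+1)$ with $m \in \ZZ$; unwinding the definition $k(w,\alpha) = \max\{i \sep A_w \subset H^+_{\alpha,i}\}$ then gives $k(w,\alpha) = m = \lfloor \langle \alpha \sep x\rangle\rfloor$ for any $x \in A_w$. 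From this I read off that $\sign(k(w,\alpha)) = +$ exactly when $\langle\alpha\sep x\rangle > 1$, that $\sign(k(w,\alpha)) = 0$ exactly when $0 < \langle\alpha\sep x\rangle < 1$, and that $\sign(k(w,\alpha)) = -$ exactly when $\langle\alpha\sep x\rangle < 0$. In other words, the $\alpha$-coordinate of $\sign(w)$ depends only on which of the three strips cut out by $H_{\alpha,0}$ and $H_{\alpha,1}$ contains $A_w$.

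For the constancy claim, I would fix $\alpha$ and define on $V \setminus (H_{\alpha,0}\cup H_{\alpha,1})$ the locally constant function $\sigma_\alpha$ taking the values $-,0,+$ on the three open strips above; by the previous paragraph $\sigma_\alpha$ agrees with $\sign(k(w,\alpha))$ on every alcove. Since $H_{\alpha,0}\cup H_{\alpha,1} \subseteq \mathcal{A}_1$, a Shi region $R$, being connected and disjoint from $\mathcal{A}_1$, lies in a single connected component of $V \setminus (H_{\alpha,0}\cup H_{\alpha,1})$, so $\sigma_\alpha$ --- hence the $\alpha$-coordinate of $\sign(w)$ --- is constant on $R$. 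Ranging over all $\alpha \in \Phi^+$ shows $\sign(w)$ is constant on $R$, which legitimises the definition of $\sign(R)$.

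For injectivity I would invoke the standard fact that a region of a real hyperplane arrangement is determined by the side of each of its hyperplanes: two points lie in the same connected component of $V\setminus\mathcal{A}_1$ if and only if they lie on the same side of every $H_{\alpha,0}$ and every $H_{\alpha,1}$, the nontrivial direction following because the common intersection of the chosen open half-spaces is convex, hence connected. It therefore suffices to recover, from $\sign(R)$, the side of $R$ with respect to each hyperplane of $\mathcal{A}_1$. This is exactly what the three cases above provide: $\sigma_\alpha = -$ forces $R\subset H^-_{\alpha,0}\cap H^-_{\alpha,1}$, $\sigma_\alpha = 0$ forces $R \subset H^+_{\alpha,0}\cap H^-_{\alpha,1}$, and $\sigma_\alpha = +$ forces $R \subset H^+_{\alpha,0}\cap H^+_{\alpha,1}$. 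Since these three side-patterns are pairwise distinct (the remaining logical possibility $H^-_{\alpha,0}\cap H^+_{\alpha,1}$ is empty and never arises), $\sign(R)$ determines the side of $R$ at every hyperplane of $\mathcal{A}_1$ and hence determines $R$.

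The routine parts are the two standard facts I rely on --- that an alcove lies strictly between consecutive parallel hyperplanes, and that a region of an arrangement is pinned down by its vector of sides --- so the real content, and the only place demanding care, is the bookkeeping in the first paragraph: checking that the coarse pair $\{H_{\alpha,0},H_{\alpha,1}\}$ is precisely fine enough to detect $\sign(k(w,\alpha))$, and in particular that the value $0$ corresponds to the bounded strip $0<\langle\alpha\sep x\rangle<1$ rather than to a hyperplane. I expect this translation between the $\ZZ$-valued invariant $k(w,\alpha)$ and its three-valued sign to be the main (if modest) obstacle; once it is in place, both constancy and injectivity follow formally.
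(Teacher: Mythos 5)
Your proposal is correct. Note, however, that the paper does not prove this proposition at all: it states it and defers to Shi's work ([JYS2]) for a discussion, so there is no in-paper argument to compare against. Your self-contained elementary proof fills that gap soundly: the identification $k(w,\alpha)=\lfloor\langle\alpha, x\rangle\rfloor$ for $x\in A_w$ (valid because the image of the connected set $A_w$ under $\langle\alpha,\cdot\rangle$ avoids $\ZZ$ and hence sits in a single interval $(m,m+1)$) is exactly the right bridge between the integer invariant and the three strips cut out by $H_{\alpha,0}$ and $H_{\alpha,1}$; constancy on a region then follows from connectedness since $H_{\alpha,0}\cup H_{\alpha,1}\subseteq\mathcal{A}_1$, and injectivity follows from the standard fact that a region of a finite arrangement is determined by its vector of sides, together with your observation that the three sign values correspond bijectively to the three nonempty side-patterns (the fourth, $H^-_{\alpha,0}\cap H^+_{\alpha,1}$, being empty). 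The one thing your write-up buys beyond the citation is that it makes explicit the translation between $\sign(R)$ and the defining inequalities of $R$, which the paper only records later, at the start of Section 4, when it describes a region of sign type $v$ by the inequalities $\langle x|\alpha\rangle<0$, $0<\langle x|\alpha\rangle<1$, or $1<\langle x|\alpha\rangle$ according to $v_\alpha$; your argument in effect proves that description as well.
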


\begin{ex}
	A consequence of Theorem \ref{thm:shi_relations} is that not all sign types are possible. In type $A_2$ for instance, we have 3 roots $e_1-e_2, e_2-e_3, e_1-e_3$.
	Giving the signs in the order $(e_1-e_2, e_1-e_3, e_2-e_3)$ (as we do everywhere a $A_2$ sign type appears in the remainder of this paper), the $A_2$ sign types are the following:
	\[\begin{Bmatrix}
	(+,+,+), &(-,-,-), &(+,+,-), &(-,+,+), &(-,-,+), &(+,-,-),\\
	(+,+,0), &(0,+,+), &(+,0,-), &(-,0,+), &(-,-,0), &(0,-,-),\\ 
	& (0,+,0), &(0,0,-), &(-,0,0), &(0,0,0) &
	\end{Bmatrix}.\]
	Notice that replacing all the zeros of a possible $A_2$ sign type with pluses gives one of the possible $A_2$ sign type of the first row above. This is an important example as we will, in Sections \ref{sec:A} and \ref{sec:gen}, interpret triples of signs as an $A_2$ sign type.
\end{ex}

Given the sign type $\sign(R)$ of some Shi region $R$, it seems natural to ask to exhibit an element $w \in \widetilde{W}$ such that $\sign(w) = \sign(R)$. We can actually do slightly better than exhibit any element: in \cite[Proposition 7.2]{JYS2}, Shi proves that every Shi region has a \emph{minimal element}. We set out to describe these minimal elements in Sections \ref{sec:A} and \ref{sec:gen}.

\begin{prop}\label{prop:min_shi}
	Let $R$ be a Shi region. There exists a unique minimal element $\min(R) \in R$ in the sense that for every $w \in R$ and every $\alpha \in \Phi^+$, $|k(\min(R), \alpha)| \leq |k(w, \alpha)|$.
\end{prop}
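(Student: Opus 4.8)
The plan is to prove existence and uniqueness of the minimal element by working directly with the Shi vectors $K(w) \in \ZZ^{\Phi^+}$ and the characterization furnished by Theorem~\ref{thm:shi_relations}. Fix a Shi region $R$ with sign type $\sigma = \sign(R) \in \{-,0,+\}^{\Phi^+}$. For each $\alpha \in \Phi^+$ the sign $\sigma_\alpha$ constrains $v_\alpha = k(w,\alpha)$ for any $w \in R$: if $\sigma_\alpha = +$ then $v_\alpha \geq 1$, if $\sigma_\alpha = -$ then $v_\alpha \leq -1$, and if $\sigma_\alpha = 0$ then $v_\alpha = 0$. I would define a candidate vector $v^{\min}$ coordinatewise by choosing, for each $\alpha$, the value of smallest absolute value compatible with $\sigma_\alpha$: namely $v^{\min}_\alpha = 1$ when $\sigma_\alpha = +$, $v^{\min}_\alpha = -1$ when $\sigma_\alpha = -$, and $v^{\min}_\alpha = 0$ when $\sigma_\alpha = 0$. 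The claim would then be that this particular $v^{\min}$ is itself a Shi vector, i.e. arises as $K(w)$ for some $w \in \widetilde{W}$, and that it lies in $R$.

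\medskip

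The heart of the argument is verifying that $v^{\min}$ satisfies the Shi relations of Theorem~\ref{thm:shi_relations}: for every additive triple $\alpha + \beta = \gamma$ in $\Phi^+$ one needs $v^{\min}_\gamma - v^{\min}_\alpha - v^{\min}_\beta \in \{0,1\}$. This is where the combinatorial structure of the sign types must be exploited. The key observation, foreshadowed by the $A_2$ example in the excerpt, is that an additive triple $(\alpha,\beta,\gamma)$ in the ambient root system restricts to a rank-two sub-system whose admissible sign patterns are exactly those tabulated for $A_2$; since every triple of coordinates of a genuine Shi vector realizes one of those admissible $A_2$ sign types, I would argue that taking the minimal-absolute-value representative in each coordinate sends an admissible pattern to the corresponding entry in the first row of the $A_2$ table (the one obtained by replacing zeros with pluses, or the sign-flipped analogue), and then check the Shi inequality directly on that finite list. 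In other words, the verification reduces to a finite, triple-local check once one shows that the signs of any $v \in K(R)$ on each additive triple already form an admissible $A_2$ pattern.

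\medskip

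Having shown $v^{\min}$ is a Shi vector, I would conclude it lies in $R$: it has sign type $\sigma$ by construction (each coordinate keeps its sign), and by the injectivity of $R \mapsto \sign(R)$ stated in the preceding proposition, a Shi vector of sign type $\sigma$ must belong to the unique region with that sign type, namely $R$. Write $w = K^{-1}(v^{\min})$; then $w \in R$. Minimality is then immediate from the coordinatewise definition: for any $w' \in R$ and any $\alpha \in \Phi^+$, the sign of $k(w',\alpha)$ equals $\sigma_\alpha$, so $k(w',\alpha)$ has the same sign as $v^{\min}_\alpha$ and absolute value at least $1$ whenever $\sigma_\alpha \neq 0$; hence $|k(w,\alpha)| = |v^{\min}_\alpha| \leq |k(w',\alpha)|$. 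Uniqueness follows because any other minimal element would have to coincide with $v^{\min}$ in every coordinate.

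\medskip

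The main obstacle I anticipate is the triple-local Shi-relation check, and specifically the reduction that justifies it. It is not a priori obvious that clamping every coordinate of a Shi vector to minimal absolute value preserves the Shi relations globally, because the relations couple triples of coordinates and a naive coordinatewise modification could violate some relation $v_\gamma - v_\alpha - v_\beta \in \{0,1\}$ even if each original triple was admissible. The clean way through is to establish the lemma that the signs of any admissible Shi vector, restricted to an additive triple, form one of the sixteen admissible $A_2$ patterns, and that the minimal-representative map carries admissible patterns to Shi vectors on that rank-two triple; this is exactly the content the $A_2$ example was set up to supply, so I would lean on that finite tabulation rather than attempt an argument uniform in the rank.
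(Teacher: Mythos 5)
The paper does not actually prove this proposition: it is quoted from Shi \cite[Proposition 7.2]{JYS2}, and the whole point of the paper is that computing $\min(R)$ explicitly is nontrivial. Your proposal, however, contains a fatal gap rather than an alternative proof.

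The gap is precisely the step you flag as the ``main obstacle'': the coordinatewise-clamped vector $v^{\min}$ with entries in $\{-1,0,1\}$ is in general \emph{not} a Shi vector, and no triple-local $A_2$ check can rescue it, because it genuinely violates the Shi relations. Take type $A_2$ and the region with sign type $(+,+,+)$ (signs in the order $e_1-e_2$, $e_1-e_3$, $e_2-e_3$). Your candidate is $v^{\min}=(1,1,1)$, but the Shi relation for the additive triple $(e_1-e_2)+(e_2-e_3)=e_1-e_3$ demands $v_{1,3}=v_{1,2}+v_{2,3}+\varepsilon$ with $\varepsilon\in\{0,1\}$, i.e.\ $v_{1,3}\in\{2,3\}$; the actual minimal element of this region is $(1,2,1)$, consistent with Proposition \ref{prop:shi_min_A} where $\eta_{1,3}=\eta_{1,2}+\eta_{2,3}=2$. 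So the minimal element's coordinates are not bounded by $1$ in absolute value, and minimality in the sense of the proposition ($|k(\min(R),\alpha)|\leq|k(w,\alpha)|$ coordinatewise over $w\in R$) does not reduce to choosing the smallest value compatible with each sign independently. Your uniqueness argument inherits the same flaw, since it presumes the minimal element is this clamped vector. A correct proof (Shi's) must show that the coordinatewise infimum over the region is itself attained by an element of the region, which requires a genuinely global argument about admissible sign types, not a per-coordinate or per-triple reduction; the explicit formula via $\eta$ in Proposition \ref{prop:shi_min_A} is exactly the content that replaces your $v^{\min}$.
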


\begin{rmk}
	The minimal element can be defined in other equivalent ways: it is also the unique element such that $\sum_{\alpha \in \Phi^+} |k(w, \alpha)|$ is minimized, or the unique minimal element of $R$ for the \emph{weak order} of $\widetilde{W}$. We refer to \cite[Section 7]{JYS2} for details. For our application, the version of Proposition \ref{prop:min_shi} is the most convenient.
\end{rmk}


\section{Type $A_{n}$}\label{sec:A}

In this Section we systematically refer to root systems, Weyl groups, Shi regions... of type $A_{n}$, with the following usual conventions for the underlying root system $\Phi$:
\[\Phi^+ = \{e_i -e_j \sep 1 \leq i < j \leq n+1\}, \qquad \Delta = \{e_i-e_{i+1} \sep 1 \leq i \leq n\}.\]
If $v$ is a vector indexed by $\Phi^+$, we will shorten $v_{e_i-e_j}$ by $v_{i,j}$. It will sometimes be useful to write $v_{i,j}$ as $v_{j,i}$: this is non ambiguous as $e_i-e_j \in \Phi^+$ if and only if $i<j$.

As usual, we have a realization of $A_{n}$ as $\symm_{n+1}$. Let $\pi$ be a permutation of $\intint{1, n+1}$. If $a, b \in \intint{1, n+1}$ are distinct, $c = \pi\inv(a)$ and $d=\pi\inv(b)$, we define an \emph{arc} on $\pi$ as $\{(a, c), (b, d)\}$. Because when $\pi$ is fixed, knowing $\{a, b\}$ and knowing $\{c, d\}$ is the same, we will designate $\{(a, c), (b, d)\}$ either by $(a, b)$ or $(b, a)$ if we are interested in the values or, if we are interested in the positions, $[c, d]$ or $[d, c]$.
An \emph{arc diagram} is a pair $(\pi, P)$ where $\pi \in \symm_{n+1}$ and $P$ is a partition of $\intint{1, n+1}$. If $P = \{P_1, ..., P_k\}$ and $P_i = \{p_{i, 1}, ..., p_{i, k_i}\}$ with $p_{i, j} \leq p_{i, j+1}$ for all $i \in \intint{1, k}$, $j \in \intint{1, k_i-1}$, the \emph{set of arcs} of $(\pi, P)$ is:
\[\arcs(\pi, P) = \{[p_{i, j}, p_{i, j+1}] \sep i \in \intint{1, k}, j \in \intint{1, k_i-1}\}.\]
Note that we can recover $P$ from the set of arcs, meaning we can graphically represent an arc diagram as in Figure \ref{fig:bijA}.
Two arcs $[a, b]$ and $[c, d]$ with $a<b, c<d$ and $a<c$ are \emph{crossing} if $a < c < b < d$. They are \emph{nesting} (in what we call a "\emph{pictorial}" sense) if $a < c < d < b$. An arc diagram is \emph{non-crossing} (resp. \emph{non-nesting}) if no two of its arcs are crossing (resp. nesting).

\subsection{The Athanasiadis-Linusson bijection with parking functions}
In this Section, we present a bijection between Shi regions of type $A_{n-1}$ and certain types of arc diagrams given by Athanasiadis \& Linusson in \cite{athanasiadis1999simple}.

Consider a Shi region of type $A_{n-1}$ given by its sign type $v = (v_{i, j})_{1\leq i<j \leq n}$. Construct the vector $I = (|\{v_{i, j}= 0 \textrm{ or } +\sep j \in \intint{1,n}\}|)_{1 \leq i < n}$. From \cite{athanasiadis1999simple}, this is the inversion vector of some  $\pi \in \symm_n$, meaning that there exists a unique $\pi \in \symm_n$ such that $I_i$ is equal to the number of values lower than $i$ that appear to the left of $i$\footnote{This is essentially a Lehmer code}. Finally, for every $1\leq i<j \leq n$, if $v_{i, j} = +$, add an arc between $i$ and $j$, removing any arc that contains another.

\vspace{-1mm}
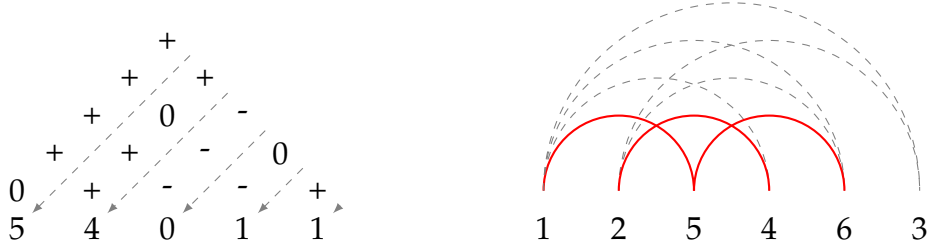
\begin{figure}[h]
	\centering
	\begin{tikzpicture}
%
%
%
%

\node at (0,-1/2){5};
\node at (1,-1/2){4};
\node at (2,-1/2){0};
\node at (3,-1/2){1};
\node at (4,-1/2){1};

\node at (0,0/2){0};
\node at (1,0/2){+};
\node at (2,0/2){-};
\node at (3,0/2){-};
\node at (4,0/2){+};

\node at (0.5,1/2){+};
\node at (1.5,1/2){+};
\node at (2.5,1/2){-};
\node at (3.5,1/2){0};

\node at (1,2/2){+};
\node at (2,2/2){0};
\node at (3,2/2){-};

\node at (1.5,3/2){+};
\node at (2.5,3/2){+};

\node at (2,4/2){+};

\node at (7,-1/2){1};
\node at (8,-1/2){2};
\node at (9,-1/2){5};
\node at (10,-1/2){4};
\node at (11,-1/2){6};
\node at (12,-1/2){3};

\draw[->,>=latex, dashed, gray] (2.3,1.8) to (0.2,-0.3);
\draw[->,>=latex, dashed, gray] (2.8,1.3) to (1.2,-0.3);
\draw[->,>=latex, dashed, gray] (3.3,0.8) to (2.2,-0.3);
\draw[->,>=latex, dashed, gray] (3.8,0.3) to (3.2,-0.3);
\draw[->,>=latex, dashed, gray] (4.3,-0.2) to (4.2,-0.3);

\draw(0+7,0) [gray, dashed] arc[radius=2.5, start angle=180, end angle=0];
\draw(0+7,0) [gray, dashed] arc[radius=2, start angle=180, end angle=0];
\draw(0+7,0) [red, thick] arc[radius=1, start angle=180, end angle=0];
\draw(0+7,0) [gray, dashed] arc[radius=1.5, start angle=180, end angle=0];

\draw(1+7,0) [gray, dashed] arc[radius=2, start angle=180, end angle=0];
\draw(1+7,0) [gray, dashed] arc[radius=1.5, start angle=180, end angle=0];
\draw(1+7,0) [red, thick] arc[radius=1, start angle=180, end angle=0];

\draw(2+7,0) [red, thick] arc[radius=1, start angle=180, end angle=0];
\end{tikzpicture}
	\caption{Example of computation of the Athanasiadis-Linusson bijection. The sign type $v$ is given as a pyramid: $v_{i,j}$ is the $i$-th sign from the left on the $j-i$-th row from the bottom. For instance, the "middle 0" is $v_{2,5}$.}
	\label{fig:bijA}
\end{figure}
\vspace{-5mm}

By construction, the set of arcs is non-nesting, and any block of the partition is sorted (meaning if the positions $p < q$ are in the same block, then $\pi(p) < \pi(q)$).

\begin{thm}\cite[Theorem 2.2]{athanasiadis1999simple}\label{thm:AL}
	This defines a bijection between Shi regions in type $A_{n-1}$ and non-nesting arc diagrams with sorted blocks on $\intint{1, n}$. For the purpose of this paper, we call such diagrams \emph{(type $A_{n-1}$) parking functions}.
\end{thm}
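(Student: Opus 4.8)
The plan is to prove the bijection by constructing an explicit inverse map and checking that the two compositions are the identity, using Theorem \ref{thm:shi_relations} as the sole structural input. In type $A_{n-1}$ the only instances of the Shi relations come from triples $1\le i<j<k\le n$, since $(e_i-e_j)+(e_j-e_k)=e_i-e_k$. Consequently a vector $v$ is a genuine sign type exactly when, for every such triple, $(v_{i,j},v_{i,k},v_{j,k})$ is one of the sixteen admissible $A_2$ sign types listed in the Example, and conversely. My first move is therefore to record this reformulation, after which every verification below becomes a finite inspection of that list.

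\textbf{The candidate inverse.} Given a non-nesting arc diagram $(\pi,P)$ with sorted blocks, I would define $v$ by the rule: $v_{i,j}=+$ when $i$ and $j$ lie in a common block of $P$; $v_{i,j}=-$ when $(i,j)$ is inverted, i.e. $i<j$ but the value $i$ sits to the right of the value $j$ (so $\pi\inv(i)>\pi\inv(j)$); and $v_{i,j}=0$ otherwise. I must first check these cases are exhaustive and disjoint: sortedness of the blocks forces two values in a common block to be non-inverted, so the first two cases never collide, and $0$ is precisely the non-inverted, different-block residue. Showing that this $v$ satisfies all the $A_2$ constraints is then a direct check against the list.

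\textbf{Well-definedness of the forward map.} Because $I_i$ counts only indices smaller than $i$, it automatically satisfies $0\le I_i\le i-1$, so it is a legitimate inversion table and $\pi$ is well-defined with no appeal to the Shi relations. The real content is to show that, starting from a valid $v$, the $+$-positions organize into a non-nesting partition with sorted blocks. Transitivity of ``belonging to the same block'' is the pivotal observation: inspecting the admissible triples shows that the unique one with $v_{i,j}=+$ and $v_{j,k}=+$ is $(+,+,+)$, forcing $v_{i,k}=+$, and the analogous checks for the other relative orders of a triple show the relation ``$v_{\bullet}=+$'' is transitive, so its classes are honest blocks. The same casework forbids a $+$ on an inverted pair (hence the blocks are sortable) and forbids the pictorially nested configuration among the innermost arcs (hence the diagram is non-nesting).

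\textbf{Matching the permutation, and the main obstacle.} The crux is to prove that the relation ``$i<j$ and $v_{i,j}\in\{0,+\}$'' coincides, as a relation on pairs, with the non-inversion relation of the permutation $\pi$ reconstructed from $I$. The inversion table records only \emph{how many} smaller values sit to the left of each $i$, so I would need to upgrade this numerical datum to an equality of sets; this reconciliation is where I expect the difficulty to lie. The admissibility of every triple is exactly what forbids the cyclic patterns that would prevent the $\{0,+\}$-relation from being the non-inversion order of a single permutation, and I would close the gap by an induction on $n$ that peels off the largest value and identifies the two relations. Once this is established, mutual inversion is immediate: reconstructing $v$ from $(\pi,P)$ returns $-$ on exactly the inversions, $+$ on exactly the same-block pairs, and $0$ elsewhere, while applying the forward map to such a $v$ recovers $I$ (hence $\pi$) from the inversions and recovers $P$ as the connected components of the innermost $+$-arcs, the discarded containing arcs being precisely the transitively forced $+$-pairs inside a block. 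Both compositions are then the identity, which proves the claim.
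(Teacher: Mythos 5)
The paper does not actually prove this statement---it is quoted from Athanasiadis--Linusson---so there is no internal proof to compare against; I can only assess your argument on its own terms, and it contains a genuine error. Your candidate inverse map is not the inverse of the forward construction: the condition ``$v_{i,j}=+$'' is \emph{not} equivalent to ``$i$ and $j$ lie in a common block of $P$''. The paper's own Figure 1 is a counterexample: there $\pi = 1,2,5,4,6,3$, the blocks (by values) are $\{1,5,6\}$, $\{2,4\}$, $\{3\}$, and yet $v_{2,3}=+$ even though $2$ and $3$ lie in different blocks. The correct rule is that for a non-inverted pair, $v_{i,j}=+$ exactly when the interval of \emph{positions} between $\pi^{-1}(i)$ and $\pi^{-1}(j)$ contains at least one arc of the diagram; membership in a common block is sufficient for this but far from necessary, because the forward map discards every $+$-arc that contains another, and those discarded pairs cannot be recovered from the block structure alone. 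Your rule applied to this example would output $v_{2,3}=0$, and then the triple $(v_{2,3},v_{2,4},v_{3,4})=(0,+,-)$ is not an admissible $A_2$ sign type, so the ``direct check against the list'' you defer would in fact fail. The root of the problem is the claim that the $+$-relation has ``honest blocks'' as classes: the list of admissible triples only gives the one-sided implication $v_{i,j}=+\wedge v_{j,k}=+\Rightarrow v_{i,k}=+$ for $i<j<k$, while $(+,+,-)$ being admissible shows the relation is not an equivalence relation, so it does not partition $\{1,\dots,n\}$ at all.

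Two further gaps would remain even after repairing the inverse map. First, your opening reformulation---that a sign vector is realized by a Shi region if and only if all of its rank-two restrictions are admissible---uses Theorem \ref{thm:shi_relations} only for the ``only if'' direction; the converse (local admissibility implies the existence of an integer vector with those signs satisfying \emph{all} the Shi relations simultaneously) is the substantive half of Shi's sign-type theorem and does not follow from a finite inspection of the $A_2$ list. Without it, you cannot conclude that your reconstructed $v$ is the sign type of an actual region. Second, the step you yourself flag as the crux---that the relation ``$i<j$ and $v_{i,j}\in\{0,+\}$'' is the non-inversion order of the single permutation determined by the inversion table $I$---is only sketched; the proposed induction on $n$ is plausible but is precisely where the content of the argument lives, so as written the proof is incomplete even apart from the error above.
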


\subsection{The minimal elements}
In this section, we prove the following result:
\begin{prop}\label{prop:shi_min_A}
	Let $R$ be a Shi region, $v = \sign(R)$.
	Let $(\pi, P)$ be the parking function associated to $v$. We define $\eta_{i,j}$ to be the maximal number of non-crossing arcs of $P$ between the values $i$ and $j$. The vector $m \in \ZZ^{\Phi^+}$ defined as follows is $R$'s minimal element:
	\[m_{i, j} = \left\{\begin{aligned}
	\eta_{i,j} &\textrm{ if } v_{i,j} = 0, +\\
	-(\eta_{i,j}+1) &\textrm{ if } v_{i,j} = - \\
	\end{aligned}\right..\]
\end{prop}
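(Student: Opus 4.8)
The plan is to certify the candidate vector $m$ by establishing three things. First, that $m$ satisfies the Shi relations of Theorem \ref{thm:shi_relations}, so that $m = K(w)$ for a genuine $w \in \widetilde{W}$; second, that $\sign(m) = v$, so that this $w$ lies in $R$; and third, that $m$ is coordinate-wise minimal in absolute value among all Shi vectors of sign type $v$, which by Proposition \ref{prop:min_shi} is exactly the characterisation of $\min(R)$. I would dispatch the sign computation first, since it is the most direct and also pins down the combinatorial content of $\eta$. Writing $p = \pi^{-1}(i)$ and $q = \pi^{-1}(j)$ for the positions of $i$ and $j$, the arcs counted by $\eta_{i,j}$ are those of $P$ running between $p$ and $q$; directly from the construction of the Athanasiadis--Linusson bijection (Theorem \ref{thm:AL}) I would show that there is at least one such arc precisely when $v_{i,j} = +$ and none precisely when $v_{i,j} = 0$. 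Since $m_{i,j} = \eta_{i,j}$ in the $0/+$ cases and $m_{i,j} = -(\eta_{i,j}+1) < 0$ in the $-$ case, this forces $\sign(m_{i,j}) = v_{i,j}$ throughout.

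Next I would verify the Shi relations, which in type $A_{n-1}$ amount to checking, for every $i < k < j$, that $m_{i,j} - m_{i,k} - m_{k,j} \in \{0,1\}$, where the triple $(v_{i,k}, v_{i,j}, v_{k,j})$ ranges over the admissible $A_2$ sign types listed in the Example above. I would proceed case by case: substituting the definition of $m$ turns each sign type into a target relation among $\eta_{i,j}$, $\eta_{i,k}$ and $\eta_{k,j}$ (for instance $\eta_{i,j} - \eta_{i,k} - \eta_{k,j} \in \{0,1\}$ when all three roots are positive, and a relation shifted by the $+1$ terms when some roots are negative). Each case is then reduced to one combinatorial lemma comparing the non-crossing arcs between $i$ and $j$ with those between $i$ and $k$ and between $k$ and $j$, a comparison that must be carried out according to the relative order of the positions $\pi^{-1}(i)$, $\pi^{-1}(k)$, $\pi^{-1}(j)$ and must use the non-nesting and sorted-block properties of the diagram.

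For minimality I would show that the Shi relations force $|w_{i,j}| \geq |m_{i,j}|$ for every $w \in R$. The constraint $\varepsilon \geq 0$ gives the super-additive bound $w_{i,j} \geq w_{i,k} + w_{k,j}$, and $\varepsilon \leq 1$ the dual one; iterating these along a chain of intermediate indices produces a lower bound on a positive coordinate (and a correspondingly negative upper bound on a negative one). I would then argue that the \emph{best} such chain is realised exactly by a maximal family of non-crossing arcs, so that the optimal bound equals $\eta_{i,j}$ on positive and zero roots and $\eta_{i,j}+1$ in absolute value on negative roots. Because the first two steps already exhibit $m$ as a Shi vector of sign type $v$ attaining these bounds, this identifies $m$ with $\min(R)$.

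I expect the main obstacle to be the combinatorial lemma that underlies both the Shi-relation check and the tightness half of minimality: one must match the maximisation defining $\eta$ to the extremal telescoping of Shi relations, and control how non-crossing arc counts behave across the three, possibly interleaved, windows determined by $i$, $k$ and $j$. The delicate point is the bookkeeping of the mixed sign types, where the roots do not all share a sign and the $+1$ shifts built into $m$ on negative roots must be accounted for exactly.
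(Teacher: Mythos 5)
Your overall skeleton (Shi relations $\Rightarrow$ $m$ is a genuine Shi vector; sign check $\Rightarrow$ the corresponding alcove lies in $R$; coordinatewise bound $\Rightarrow$ $m=\min(R)$ via Proposition \ref{prop:min_shi}) is sound, and your treatment of the Shi relations is essentially the paper's: everything reduces to the combinatorial fact that $\eta_{a,c}-\eta_{a,b}-\eta_{b,c}\in\{0,1\}$ for a non-nesting diagram (Lemma \ref{lem:arcs}), followed by a six-case check over the admissible $A_2$ patterns. The explicit verification that $\sign(m)=v$ (i.e.\ $\eta_{i,j}\geq 1$ exactly when $v_{i,j}=+$ among the non-negative signs) is a reasonable addition that the paper leaves implicit.

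The genuine gap is in the minimality step. The two inequalities extracted from $\varepsilon\in\{0,1\}$ are not symmetric: superadditivity $w_{a,c}\geq w_{a,b}+w_{b,c}$ telescopes cleanly, but the dual bound $w_{a,c}\leq w_{a,b}+w_{b,c}+1$ picks up a $+1$ \emph{per link}. So if $v_{i,j}=-$ and you decompose $e_i-e_j$ into $r$ atomic links each satisfying $w\leq -1$, naive iteration yields only $w_{i,j}\leq -r+(r-1)=-1$, nowhere near the required $w_{i,j}\leq -(\eta_{i,j}+1)$; "a correspondingly negative upper bound on a negative one" does not follow from the argument as stated. Even on positive coordinates, the claim that "the best such chain is realised exactly by a maximal family of non-crossing arcs" is the entire content of the step and is left unproved: the arcs of $S_{i,j}$ are ordered by \emph{position}, whereas telescoping the Shi relations in a fixed orientation requires a chain that is monotone in \emph{values}, and you must also control the signs $v$ on the gap pairs between consecutive arcs. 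The paper avoids both problems by inducting on $|\pi^{-1}(i)-\pi^{-1}(j)|$: it picks an arc endpoint $e$ strictly between the two positions with $\varepsilon_\eta=0$ in Lemma \ref{lem:arcs}, and then checks (Table \ref{fig:cases_min}) that for each of the six patterns of $(i,e,j)$ the value of $\varepsilon_m\in\{0,1\}$ realised by the formula is precisely the one minimising $|m_{i,j}|$ given the inductively minimal $|m_{i,e}|,|m_{e,j}|$; this single-step analysis is what correctly propagates the built-in $+1$ shifts on negative roots. To rescue your telescoping one would need either this induction or the alcove symmetry $k(w,-\alpha)=-k(w,\alpha)-1$ to reduce the negative case to the positive one, plus a proof that a maximal non-crossing family yields an admissible value-chain — none of which is present in the sketch.
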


This, together with Theorem \ref{thm:AL}, give an explicit bijection between parking functions and the minimal elements of the Shi regions. The following lemma is the main ingredient for proving our Proposition.

\begin{lemme}\label{lem:arcs}
	Let $P$ be a non-nesting partition of $\intint{1, n}$ (meaning it forms a non-nesting arc diagram with the identity permutation). Let $\eta \in \NN^{\Phi^+}$ as defined in Proposition \ref{prop:shi_min_A}. Then for any $1 \leq a < b < c \leq n$, $\eta_{a,c} = \eta_{a,b} + \eta_{b,c} + \varepsilon_{\eta}$, with $\varepsilon_{\eta} \in \{0,1\}$.\footnote{Although $\varepsilon_{\eta}$ depends on $a, b, c$, we do not signify it by the notation, as it is always clear in context.}
\end{lemme}

\begin{proof}
	For $i, j \in \intint{1, n}$, let $S_{i,j}$ be a set of non-nesting, non-crossing arcs between $i$ and $j$ of maximal cardinality. It is clear that $\eta_{a, c} \geq \eta_{a,b} + \eta_{b,c}$ since $S_{a,b} \cup S_{b,c}$ is a set of non-crossing, non-nesting arcs. Conversely, in $S_{a,c}$, by definition of $\eta$, at most $\eta_{a,b}$ arcs are between $a$ and $b$ and similarly for $b$ and $c$. Any arc in $S_{a,b}$ that is not between $a$ and $b$ or $b$ and $c$ must therefore be of the form $(d,e)$ with $d<b<e$ and two such arcs must be either nesting or crossing.
	Thus $S_{a,b}$ contains at most $\eta_{a,b} + \eta_{b,c}+1$ element.
\end{proof}

\vspace{-4mm}
\begin{figure}[h!]
	\centering
	\begin{tikzpicture}
\draw(0,0) -- ++ (7,0);
\draw[dashed] (0,0)--(0,2);
\draw[dashed] (4,0)--(4,1.5);
\draw[dashed] (7,0)--(7,2);
\draw[latex-latex] (0,2)--(7,2);
\draw(3.5,2)node[above]{$\eta_{a,c}$};
\draw[latex-latex] (0,1.5)--(4,1.5);
\draw(2,1.5)node[below]{$\eta_{a,b}$};
\draw[latex-latex] (4, 1.5)--(7,1.5);
\draw(5.5,1.5)node[below]{$\eta_{b,c}$};
\foreach \x/\y in {0/a, 4/b, 7/c}{
	\draw[circle,fill] (\x,0)circle[radius=1mm]node[below]{$\y$};
}
\draw(0,0) [red, very thick] arc[radius=1, start angle=180, end angle=0];
\draw(2,0) [red, very thick] arc[radius=1, start angle=180, end angle=0];
\draw(4,0) [blue, very thick] arc[radius=1, start angle=180, end angle=0];
\draw(6,0) [blue, very thick] arc[radius=0.5, start angle=180, end angle=0];
\draw(3,0) [black] arc[radius=1, start angle=180, end angle=0];

\draw(9,0) -- ++ (7,0);
\foreach \x/\y in {0/a, 3/b, 7/c}{
	\draw[circle,fill] (\x+9,0)circle[radius=1mm]node[below]{$\y$};
}

\draw(9,0) -- ++ (7,0);
\draw[dashed] (9,0)-- ++(0,2);
\draw[dashed] (12,0)-- ++(0,1.5);
\draw[dashed] (16,0)-- ++(0,2);
\draw[latex-latex] (9,2)-- (16,2);
\draw(12.5,2)node[above]{$\eta_{a,c}$};
\draw[latex-latex] (9,1.5)--(12,1.5);
\draw(10.5,1.5)node[below]{$\eta_{a,b}$};
\draw[latex-latex] (12, 1.5)--(16,1.5);
\draw(14,1.5)node[below]{$\eta_{b,c}$};

\draw( 9,0) [red, very thick] arc[radius=1, start angle=180, end angle=0];
\draw(11,0) [magenta, very thick] arc[radius=1, start angle=180, end angle=0];
\draw(13,0) [blue, very thick] arc[radius=1, start angle=180, end angle=0];
\draw(15,0) [blue, very thick] arc[radius=0.5, start angle=180, end angle=0];
\draw(12,0) [black] arc[radius=1, start angle=180, end angle=0];
\end{tikzpicture}
	\vspace{-8mm}
	\caption{Illustration of Lemma \ref{lem:arcs}: the arcs of $S_{a,b}$ are represented in bold. On the left, the case where $\varepsilon_{\eta} = 0$, on the right, the case where $\varepsilon_{\eta} = 1$. Note that we need not to choose the same set of non-nesting non-crossing arcs to compute $\eta_{a,b}, \eta_{b,c}$ and $\eta_{a,c}$.}
	\label{fig:arcs_giv_shi}
\end{figure}
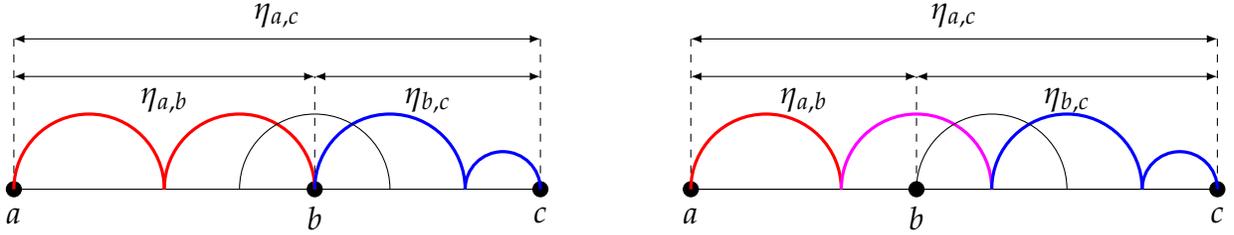

\vspace{-6mm}

\begin{proof}[Proof of Proposition \ref{prop:shi_min_A}]
	Firstly, we show that the vector $m$ is a Shi vector, meaning it verifies the Shi relations $m_{a,c} = m_{a, b} + m_{b,c} + \varepsilon_{m}$\footnote{Same remark as footnote 2.} for $1 \leq a < b < c \leq n+1$. Interpret $(v_{a,b}, v_{a,c}, v_{b,c})$ as an $A_2$ sign type: applying the Athanasiadis-Linusson bijection, we associate it to a permutation of $\symm_{\{a,b,c\}}$ called its \emph{pattern}. The pattern is unchanged when replacing all zeros with pluses in a sign type, leaving 6 cases to check: we do so in Table \ref{fig:cases} (patterns are given on $\{a,b,c\} = \{1,2,3\}$ for simplicity). 
	\begin{table}[h!]
		\begin{tabular}{ p{18mm} | c || c: p{0mm} c p{0mm} c p{0mm} c}			
			Sign type & Pattern & $m_{1,3}$ & = & ${\red m_{1,2}}$ & + & ${\blue m_{2,3}}$ & + & ${\magenta \varepsilon_m}$\\
			\hline
			$(+,+,+)$ & $1,2,3$ & $\eta_{1,2} + \eta_{2,3} + \varepsilon_{\eta}$ & = & ${\red \eta_{1,2}}$ & + & ${\blue \eta_{2,3}}$ & + & ${\magenta \varepsilon_{\eta}}$\\
			$(+,+,-)$ & $1,3,2$ & $\eta_{1,2} - \eta_{2,3} - \varepsilon_{\eta}$ & = & ${\red \eta_{1,2}}$ & + & ${\blue -(\eta_{2,3}+1)}$ & + & ${\magenta 1-\varepsilon_{\eta}}$\\
			$(-,+,+)$ & $2,1,3$ & $\eta_{2,3} - \eta_{1,2} - \varepsilon_{\eta}$ & = & ${\red -(\eta_{1,2}+1)}$ & + & ${\blue \eta_{2,3}}$ & + & ${\magenta 1-\varepsilon_{\eta}}$\\
			$(-,-,+)$ & $2,3,1$ & $-(\eta_{1,2} - \eta_{2,3} - \varepsilon_{\eta} + 1)$ & = & ${\red -(\eta_{1,2}+1)}$ & + & ${\blue \eta_{2,3}}$ & + & ${\magenta \varepsilon_{\eta}}$\\
			$(+,-,-)$ & $3,1,2$ & $-(\eta_{3,2} - \eta_{1,2} - \varepsilon_{\eta} + 1)$ & = & ${\red \eta_{1,2}}$ & + & ${\blue -(\eta_{2,3}+1)}$ & + & ${\magenta \varepsilon_{\eta}}$\\
			$(-,-,-)$ & $3,2,1$ & $-(\eta_{3,2} + \eta_{1,2} + \varepsilon_{\eta} + 1)$ & = & ${\red -(\eta_{1,2}+1)}$ & + & ${\blue -(\eta_{2,3}+1)}$ & + & ${\magenta 1-\varepsilon_{\eta}}$\\
		\end{tabular}
		\caption{In each case we translate the Shi relation according to our Proposition \ref{prop:shi_min_A}. Notice that everytime,  the $\varepsilon_{\eta}$ given by Lemma \ref{lem:arcs} gives $\varepsilon_m \in \{0,1\}$: $m$ is a Shi vector.}
		\label{fig:cases}
		\vspace{-3mm}
	\end{table}
	
	Secondly, we show that $m$ is minimal. 
	Let $1 \leq a < b \leq n+1$ and $c = \pi\inv(a), d = \pi\inv(b)$. We proceed by strong induction on $|c-d|$. If $|c-d| = 1$, the definition of the Athanasiadis-Linusson bijection authorizes only three cases:
	either $c < d, \eta_{a, b} = 1$ so $m_{a,b} = 1$; $c < d, \eta_{a, b} = 0$ so $m_{a,b} = 0$ or $c > d, \eta_{a, b} = 0$ so $m_{a,b} = -1$. In all 3 cases, this is the minimal possible value that respects the sign type.
	Suppose now that $|c-d| = k > 1$. If $S_{a,b} = \{(a, b)\}$ or $S_{a,b} = \emptyset$ we are in the same situation as before and $m$ could not be smaller. Otherwise, we may choose an extremity $e$ of some arc in $S_{a,b}$, strictly between the positions $c$ and $d$ such that Lemma \ref{lem:arcs} applied to ${a,e,b}$ yield $\varepsilon_{\eta} = 0$. Depending on the pattern formed by $a,e,b$, we have again six cases, which we check in Table \ref{fig:cases_min}.
	\begin{table}[h!]
		\centering
		\begin{tabular}{ c | c || c:cccccc }			
			$a,e,b$ & Sign type & $m_{a,b}$ & & $m_{a,e}$ & &  $m_{e,b}$ & &$\varepsilon_m$\\
			\hline
			$1,2,3$ & $(+,+,+)$ & $\eta_{1,2} + \eta_{2,3}$ & = +& $\eta_{1,2}$ &+& $\eta_{2,3}$ &+& $0$\\
			$1,3,2$ & $(+,+,-)$ & $\eta_{1,3} + \eta_{3,2}$ & = +& $\eta_{1,3}$ &-& $-(\eta_{2,3}+1)$ &-& $1$\\
			$2,1,3$ & $(-,+,+)$ & $\eta_{2,1} + \eta_{1,3}$ & = -& $- (\eta_{1,2}+1)$ &+& $\eta_{1,3}$ &-& $1$\\
			$2,3,1$ & $(-,-,+)$ & $-(\eta_{2,3} + \eta_{3,1} + 1)$ & = -& $\eta_{2,3}$ &+& $-(\eta_{1,3}+1)$ &-& $0$\\
			$3,1,2$ & $(+,-,-)$ & $-(\eta_{3,1} + \eta_{1,2} + 1)$ & = +& $-(\eta_{2,3}+1)$ &-& $\eta_{1,2}$ &-& $0$\\
			$3,2,1$ & $(-,-,-)$ & $-(\eta_{3,2} + \eta_{2,3} + 1)$ & = +& $-(\eta_{2,3}+1)$ &+& $-(\eta_{1,2}+1)$ &+& $1$\\
		\end{tabular}
		\caption{The formula for $m$ gives a minimal vector. We first express $m_{a,b}$ as given by our Proposition, and then using the Shi relations. Notice the last column: in every case, depending on the sign of $m_{a, b}$ and whether $\varepsilon_m$ is added or subtracted, $\varepsilon_m$ takes the value that minimizes the absolute value of $m_{a, b}$.}
		\label{fig:cases_min}
	\end{table}\\
	If we make the induction hypothesis that our formula gives the minimal possible coefficient for $m_{i,j}$ with $|\pi\inv(i)-\pi\inv(j)| < k$, then $m_{a,e}$ and $m_{e,b}$ are minimal and, from Table \ref{fig:cases_min}, so is $m_{a,b}$.
\end{proof}


\section{Generalization to classical types}\label{sec:gen}

\subsection{Generalized parking functions}
In this Section it will be useful to see a Shi region $R$ with sign type $v$ as defined by inequalities. Specifically, $R$ can be defined as the set of vectors $x \in V$ such that:
\[\forall \alpha \in \Phi^+, \quad \langle x|\alpha\rangle < 0 \textrm{ if } v_{\alpha} = -, \quad 0 < \langle x|\alpha\rangle < 1 \textrm{ if } v_{\alpha} = 0, \quad 1 < \langle x|\alpha\rangle \textrm{ if } v_{\alpha} = +.\]
The \emph{floors} of $R$ are the roots $\alpha \in \Phi^+$ such that the inequality $1 < \langle x|\alpha\rangle$ is irredundant in the above set of inequalities (equivalently, such that $H_{\alpha, 1}$ contains a facet of $\overline{R}$ and $R \subset H^+_{\alpha, 1}$). The set of floors of $R$ is denoted by $fl(R)$.

The philosophy of Athanasiadis \& Linusson's bijection is to encode a Shi region by two things: an element $w \in W$ and the floors of $R$. The element $w$ allows to situate $R$ with respect to the hyperplanes $H_{\alpha, 0}$, while the floors give the missing information about the (relevant) $H_{\alpha, 1}$. The fact that these floors form a non-nesting partition is not a "type $A_n$ miracle" and the objective of this paragraph is to state a result from Armstrong, Reiner and Rhoades \cite{armstrong2015parking} that generalize this labeling of Shi regions to the other crystallographic groups. To that end, we need to define the notions of non-nesting partition (following Postnikov \cite[Remark 2]{reiner1997non}), and of parking functions (using a slightly modified, low-tech version of the definition from \cite{armstrong2015parking}) in other Weyl types.

\begin{dftn}
	Recall the definition of root poset from \S\ref{sub:affine}. A \emph{non-nesting partition} (of type $W$) is an antichain of the root poset $\Phi^+$.
\end{dftn}

Importantly, this definition coincides with the "pictorial" notion of non-nesting partition in type $A_n$. The question of interpreting this definition in the "pictorial" sense used in Section \ref{sec:A} for the types $B_n, C_n, D_n$ is discussed in the next paragraph.

\begin{dftn}[Parking function]
	Let $W$ be a Weyl group and $\Phi^+$ the set of its positive roots. A \emph{(type $W$) parking function} is a pair $(w, P)$ where $P$ is a non-nesting partition of type $W$ and $w$ is an element of $W$ such that for all $\alpha \in P, w(\alpha) \in \Phi^+$.
\end{dftn}

\begin{thm}\cite[Proposition 10.3]{armstrong2015parking}\label{thm:ARR}
	Let $W$ be a Weyl group.
	\begin{itemize}
		\item Let $R$ be a region of the Shi arrangement of $W$ and $w$ the unique element of $W$ such that $R \subset wC$. Then $w\inv fl(R)$ is a non-nesting partition.\footnote{Note that in \cite{armstrong2015parking}, the authors actually prove the result for the \emph{ceilings} of $R$, that is the roots $\alpha$ such that the inequality $\langle v | \alpha\rangle < 1$ for $v \in R$ is irredundant. However, the proof is the same, replacing "ceilings inequalities" with "floor inequalities" wherever they appear.}
		\item The map that associates to a region $R$ the pair $(w, w\inv fl(R))$ is a bijection between Shi regions and parking functions.
	\end{itemize}
\end{thm}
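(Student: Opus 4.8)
The plan is to reduce everything to the dominant chamber via the $W$-action. First I would record how the chamber $wC$ is read off from the sign type: for $x \in wC$ one has $\langle x, \alpha\rangle > 0$ exactly when $w\inv\alpha \in \Phi^+$, so the partition of $\Phi^+$ into the roots with $v_\alpha = -$ and those with $v_\alpha \in \{0,+\}$ is precisely recorded by $w$, and $w$ is the unique Weyl group element with $R \subseteq wC$. Setting $y = w\inv x \in C$ and $\Psi = \{\beta \in \Phi^+ : w\beta \in \Phi^+\} = \Phi^+ \setminus N(w)$, where $N(w) = \{\beta \in \Phi^+ : w\beta \in \Phi^-\}$ is the inversion set, the identity $\langle x, \alpha\rangle = \langle y, w\inv\alpha\rangle$ shows that $w\inv R$ is the region of $C$ cut out by the hyperplanes $H_{\beta,1}$ with $\beta \in \Psi$ only: the remaining level-$1$ hyperplanes are pushed to $H_{\beta,-1}$ with $\beta \in \Phi^+$, which never meet the dominant chamber. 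Since $w\inv$ carries floors to floors, $w\inv fl(R)$ is exactly the set of floors of $w\inv R$ inside this restricted arrangement, and it is contained in $\Psi \subseteq \Phi^+$.

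For the first assertion I would argue as in the dominant case. On $y \in C$ every $\langle y, \delta\rangle$ is positive, so if $\beta \in \Psi$ satisfies $\langle y, \beta\rangle > 1$ and $\beta' \in \Psi$ with $\beta \leq \beta'$ in the root poset, then writing $\beta' - \beta = \sum_i c_i\alpha_i \in \NN\Delta$ gives $\langle y, \beta'\rangle = \langle y, \beta\rangle + \sum_i c_i\langle y, \alpha_i\rangle > 1$. Hence $U = \{\beta \in \Psi : \langle y, \beta\rangle > 1\}$ is an up-set of the root poset intersected with $\Psi$, its floors are its minimal elements, and the minimal elements of an up-set form an antichain. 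The same computation shows that if two floors were comparable, say $\beta_1 < \beta_2$, then the inequality $\langle y, \beta_2\rangle > 1$ would already be implied by $\langle y, \beta_1\rangle > 1$ together with the walls $\langle y, \alpha_i\rangle > 0$ of $C$, contradicting irredundancy. This proves $w\inv fl(R)$ is a non-nesting partition, and the parking-function condition is then automatic, since $w(w\inv\alpha) = \alpha \in \Phi^+$ for every floor $\alpha$.

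Injectivity is immediate from this reduction: $w$ is determined by $R$, and by the up-set description the full sign type is recovered from $(w, P)$ by declaring $v_\alpha = -$ when $w\inv\alpha \in \Phi^-$, $v_\alpha = +$ when $w\inv\alpha \in \Psi$ lies above some element of $P$, and $v_\alpha = 0$ otherwise. Thus $R \mapsto (w, w\inv fl(R))$ can be inverted on its image, and it remains to see that every parking function is attained.

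Surjectivity is the main obstacle. Given a parking function $(w,P)$ the candidate sign type $v$ is forced by the formula above, and two things must be checked: that the corresponding region is non-empty, and that its floors transport back to exactly $P$, neither more nor fewer. The second point follows once non-emptiness is known, because by construction $U$ is the up-closure of $P$ in $\Psi$ and $P$ is an antichain, so $P$ is precisely the set of minimal elements of $U$. The real work is realizability: one must produce $y \in C$ with $\langle y, \beta\rangle > 1$ for every $\beta \in \Psi$ above $P$ and $\langle y, \beta\rangle < 1$ for the remaining $\beta \in \Psi$. I would establish this feasibility either by invoking the classical correspondence between order filters of the root poset and dominant Shi regions (the case $w = e$, where $\Psi = \Phi^+$) and then accounting for the biconvex set $N(w)$ of deleted hyperplanes, or, more robustly, by a Farkas-type linear-programming argument exploiting that $\Psi$ is closed under root addition while $P$ is an antichain; checking that the Shi relations of Theorem \ref{thm:shi_relations} hold for this $v$ would give an independent confirmation that $v$ is a genuine sign type. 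I expect the delicate step throughout to be controlling the interaction between the deleted hyperplanes, those $\beta$ with $w\beta \in \Phi^-$, and the root-poset order, since comparabilities in $\Phi^+$ that leave $\Psi$ are exactly what must not disturb the antichain and up-set bookkeeping.
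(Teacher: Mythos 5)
First, a point of order: the paper does not prove this statement. It is quoted (up to the ceilings/floors translation explained in the footnote) from Armstrong--Reiner--Rhoades \cite{armstrong2015parking}, so there is no internal proof to compare your argument against; what follows is an assessment of your sketch on its own terms.

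Your reduction to the dominant chamber is correct and is the natural way in: $w$ is read off from the $\{-\}$ versus $\{0,+\}$ part of the sign type, the transported region is a connected component of $C$ cut only by the hyperplanes $H_{\beta,1}$ with $\beta\in\Psi=\Phi^+\cap w\inv\Phi^+$, and your up-set computation correctly shows that two comparable roots cannot both be floors, which settles the first bullet. The gap is in the second bullet, where you assert but do not prove the two facts that carry all the content. (i) Every minimal element of the up-set $U=\{\beta\in\Psi \mid \langle y,\beta\rangle>1\}$ must actually be a floor, i.e.\ its inequality must be irredundant against the \emph{entire} system (chamber walls, the other inequalities $\langle y,\beta'\rangle>1$, and the inequalities $\langle y,\beta'\rangle<1$ for $\beta'\in\Psi\setminus U$); poset-minimality in $U$ does not by itself give this, and without it you cannot recover $U$ from $fl(R)$, so even injectivity is not yet established. (ii) Surjectivity: for every $w$ and every antichain $P\subseteq\Psi$, the region of $C$ prescribed by the up-closure of $P$ must be non-empty. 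You rightly flag (ii) as ``the real work'' and propose two strategies (reduce to $w=e$ and account for $N(w)$, or a Farkas-type argument) without carrying either out; but (i) and (ii) together \emph{are} the cited proposition --- already in the dominant case $w=e$ this is the non-trivial theorem that dominant Shi regions biject with antichains of the root poset. So the proposal is a sound framing with the right normalizations, but the essential existence and irredundancy statements are left unproved.
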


Comparing this result with Theorem \ref{thm:AL}, the roots in $w\inv(fl(R))$ can be seen as the arcs of the arc diagram. The use of $w\inv$ comes from the fact that two arcs are nesting or not only depends on the positions of their extremities: the root $\alpha$ gives the values of the arc while $w\inv(\alpha)$ gives its positions.

\subsection{Types $B_n, C_n, D_n$}
\begin{dftn}
	The root systems described in Table \ref{fig:classical_root} are called the \emph{classical root systems}.
	\begin{table}[h!]
		\begin{tabular}{ c | c | c }
			Type & $\Phi^+$ & $\Delta$\\
			\hline
			$A_n$ & $e_i-e_{j}$ for $i, j \in \intint{1,n}, i < j$ & $e_i-e_{i+1}$ for $i \in \intint{1,n}$\\
			$B_n$ & $e_i\pm e_{j}$ for $i, j \in \intint{1,n}, i < j$, $e_i$ for $i \in \intint{1, n}$ & $e_n, e_i-e_{i+1}$ for $i \in \intint{1,n-1}$\\
			$C_n$ & $e_i\pm e_{j}$ for $i, j \in \intint{1,n}, i < j$, $2e_i$ for $i \in \intint{1, n}$ & $2e_n, e_i-e_{i+1}$ for $i \in \intint{1,n-1}$\\
			$D_n$ & $e_i\pm e_{j}$ for $i, j \in \intint{1,n}, i < j$ & $e_{n-1}+e_n, e_i-e_{i+1}$ for $i \in \intint{1,n-1}$\\
		\end{tabular}
		\caption{Classical root systems}
		\label{fig:classical_root}
	\end{table}
\end{dftn}

When examining the proof of Proposition \ref{prop:shi_min_A}, the essential element appears to be Lemma \ref{lem:arcs}. Thus we need to check mainly two things:
\begin{enumerate}
	\item In type $A_n$, we used the fact that a relation $\alpha+\beta=\gamma$ corresponds to a triple of integers in $\intint{1, n}$ on which we applied Lemma \ref{lem:arcs}. This implicitly used the realization of $A_n$ as a permutation group. As $B_n, C_n, D_n$ can also be realized as permutation groups of $\intint{-n, n}$, this extends to all classical groups, using the correspondence between roots and pairs of integers given in Table \ref{fig:rootsNarcs}.
	\begin{table}[h!]
		\centering
		\begin{tabular}{c|c|c|c|c}
			Root   & $e_i-e_j$ & $e_i+e_j$ & $2e_i$ & $e_i$\\
			\hline
			Extremities & $i$ to $j$ and $-j$ to $-i$ & $i$ to $-j$ and $j$ to $-i$ & $i$ to $-i$ & $i$ to $0$ and $0$ to $-i$
		\end{tabular}
		\caption{Roots and corresponding arcs in classical types. The table gives the \emph{positions} of the extremities of the arcs when constructing the arc diagram from a partition $P$, and the \emph{values} of the extremities when computing the minimal Shi vector.}
		\label{fig:rootsNarcs}
		\vspace{-4mm}
	\end{table}
	
	It is easy to check that all the relations between roots on which Theorem \ref{thm:shi_relations} applies correspond to suitable integer triples (for instance, the relation $e_i-e_j+e_i+e_j = 2e_i$ can correspond to $(i,j,-i)$ or $(-i,-j,i)$)
	\item Lemma \ref{lem:arcs} applies only for the "pictorial" notion on non-nesting arcs, meaning for two arcs $[a, b], [c,d]$ with $a<b, c<d$ and $a<c$ we don't have $a<c<d<b$. Thus, to use Theorem \ref{thm:ARR}, we need to get a "pictorial" representation of the "antichain" definition of non-nesting. We discuss this point below. In particular, this requires an extension of Lemma \ref{lem:arcs} in the case $D_n$.
\end{enumerate}

\paragraph{Type $B_n, C_n$:}
	The groups $B_n$ and $C_n$ are isomorphic to the permutation group:
	\[\{\pi \in \symm_{\intint{-n,n}} \sep \forall i \in \intint{1,n}, \pi(i) = -\pi(-i)\}.\]
	Note that although $B_n$ and $C_n$ are isomorphic, they require a slightly different treatment as their Shi arrangements are not the same.
	Given such a permutation $\pi$, we write it as the sequence of its values in the following order:
	\[\begin{matrix}
	\pi(1)&\pi(2)&\dots&\pi(n)&0&\pi(-n)&\dots&\pi(-2)&\pi(-1)
	\end{matrix}\]
	By convention, we say that the identity permutation written in this format is sorted.
	We define arc diagrams as before: given a Shi region $R$, we compute the corresponding parking function $(\pi, P)$, we write $\pi$ as above, and then, for every root in $P$, we draw arcs between \emph{positions} encoded by the root as precised above in Table \ref{fig:rootsNarcs}: the difference between $B_n$ and $C_n$ is reflected in the fact that $2e_i$ and $e_i$ do not correspond to the same arcs.
	It is known (see for instance \cite{athanasiadis1998noncrossing}) that, using these conventions, a non-nesting partition in the "antichain" sense corresponds to a non-nesting partition in the "pictorial" sense as we have used in Section \ref{sec:A}. 

	Given $i, j \in \intint{-n, n}$ and for $T \in \{B_n, C_n\}$, we define $\eta^{T}_{i,j}$ as the maximal number of non-nesting non-crossing arcs that can be chosen between the $i$ and $j$.

\paragraph{Type $D_n$:}
The group $D_n$ is isomorphic to the permutation group:
\[\{\pi \in B_n \sep \{i \in \intint{1, n}\sep \pi(i) < 0\}\textrm{ has even cardinality.}\}.\]
Given such a permutation $\pi$, we will write it as the sequence of its values in the following order:
\begin{figure}[h!]
	\vspace{-3mm}
	\centering
	\begin{tikzpicture}
	\draw (-6, 0)node{$\pi(1)$};
	\draw (-4.5, 0)node{$\pi(2)$};
	\draw (-3.5, 0)node{$\cdots$};
	\draw (-2, 0)node{$\pi(n-1)$};
	\draw (0,0.5)node{$\pi(n)$};
	\draw (0,-0.5)node{$\pi(-{n})$};
	\draw (2, 0)node{$\pi(-{n+1})$};
	\draw (3.5, 0)node{$\cdots$};
	\draw (4.5, 0)node{$\pi(-{2})$};
	\draw (6, 0)node{$\pi(-{1})$};
	\end{tikzpicture}
\end{figure}\\
Again, by convention, we say that the identity permutation written in this format is sorted, meaning $n$ and $-{n}$ are uncomparable, both greater than $n-1$ and both lesser than $-{n+1}$. We do this to "solve" an issue noted in \cite{athanasiadis1998noncrossing}: if we simply ordered the elements of $\intint{-n, n}$ as we did in case $B_n, C_n$, the antichain $\{e_{n-1}-e_n, e_{n-1}+e_n\}$ in type $D_n$ would correspond to two nesting arcs. By making $n$ and $-n$ incomparable we preserve the correspondence between the "antichain" definition of non-nesting and the "pictorial" definition. 
This has the advantage to (almost) allow us the use of Lemma \ref{lem:arcs} when we define arc diagrams as before: for $(\pi, P)$, write $\pi$ in the above format, then draw arcs as indicated by Table \ref{fig:rootsNarcs}. The drawback is that the definition of the arc-counting vector $\eta$ is slightly more complicated. See Figure \ref{fig:Darcs} for a sketch of a proof the following Lemma.

\begin{lemme}\label{lem:Darcs}
	Let $(\Id_{\intint{-n, n}}, P)$ be a non-nesting arc diagram of type $D_n$. Let $a,b \in \intint{-n, n}\setminus\{0\}$. We define $\eta_{a,b}^+$ (resp. $\eta_{a,b}^-$) as the maximal number of non-nesting, non-crossing arcs between $a$ and $b$, excluding arcs connected to $-n$ (resp. to $n$).
	Define $\eta^D_{a,b} = \max(\eta_{a,b}^+, \eta_{a,b}^-)$. Then we have $\eta^D_{a,c} = \eta^D_{a,b} + \eta^D_{b,c} + \varepsilon_{\eta}$, with $\varepsilon_{\eta} \in \{0,1\}$.
\end{lemme}

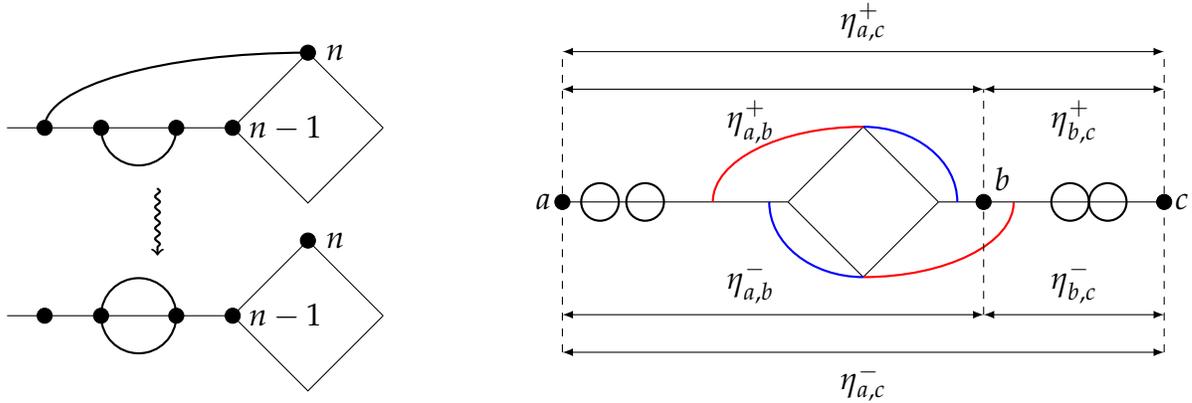
\begin{figure}[h!]
	\centering
	\begin{subfigure}[b]{0.4\textwidth}
	\centering
	\begin{tikzpicture}
	\draw (-1,0)--(2,0);
	\draw (2,0)--++(1,1)--++(1,-1);
	\draw (2,0)--++(1,-1)--++(1,1);
	\draw (3,1)[thick] arc
	[
	start angle=90,
	end angle=180,
	x radius=3.5cm,
	y radius =1cm
	];
	\draw (0.25,0)[thick] arc
	[
	start angle=180,
	end angle=360,
	radius = 0.5cm
	];
	\draw[circle,fill](-0.5,0)circle[radius=1mm];
	\draw[circle,fill](0.25,0)circle[radius=1mm];
	\draw[circle,fill](1.25,0)circle[radius=1mm];
	\draw[circle,fill](2,0)circle[radius=1mm]node[right]{$n-1$};
	\draw[circle,fill](3,1)circle[radius=1mm]node[right]{$n$};
	
	\draw [->,
	line join=round, thick,
	decorate, decoration={
		snake,
		segment length=4,
		amplitude=.9, post=lineto,
		post length=2pt
	}]  (1,-.8) -- (1,-1.7);
	
	\def\y{-2.5};
	\draw (-1,0+\y)--++(3,0);
	\draw (2,0+\y)--++(1,1)--++(1,-1);
	\draw (2,0+\y)--++(1,-1)--++(1,1);
	\draw (1.25,0+\y)[thick] arc
	[
	start angle=0,
	end angle=360,
	radius = 0.5cm
	];
	\draw[circle,fill](-0.5,0+\y)circle[radius=1mm];
	\draw[circle,fill](0.25,0+\y)circle[radius=1mm];
	\draw[circle,fill](1.25,0+\y)circle[radius=1mm];
	\draw[circle,fill](2,0+\y)circle[radius=1mm]node[right]{$n-1$};
	\draw[circle,fill](3,1+\y)circle[radius=1mm]node[right]{$n$};
\end{tikzpicture}
	\vspace{3mm}
	\caption{Except in $\pm n$, a symmetric choice of arcs to compute $\eta_{a, b}$ can be made, thus...}
	\label{sfig:symm_choice}
	\end{subfigure}
	\qquad
	\begin{subfigure}[b]{0.5\textwidth}
	\centering
	\begin{tikzpicture}
	\draw (-1,0)--(2,0);
	\draw (2,0)--++(1,1)--++(1,-1);
	\draw (2,0)--++(1,-1)--++(1,1);
	\draw (4,0)--++(3,0);
	
	\draw (3,1)[red, thick] arc
	[
	start angle=90,
	end angle=180,
	x radius=2cm,
	y radius =1cm
	];
	\draw (3,-1)[red, thick] arc
	[
	start angle=270,
	end angle=360,
	x radius=2cm,
	y radius =1cm
	];
	\draw (3,-1)[blue, thick] arc
	[
	start angle=270,
	end angle=180,
	x radius=1.25cm,
	y radius =1cm
	];
	\draw (3,1)[blue, thick] arc
	[
	start angle=90,
	end angle=0,
	x radius=1.25cm,
	y radius =1cm
	];
	\draw (0.35,0)[black, thick] arc
	[start angle=0, end angle=360, radius = 0.25cm];
	\draw (-0.25,0)[black, thick] arc
	[start angle=0, end angle=360, radius = 0.25cm];
	\draw (6,0)[black, thick] arc
	[start angle=0, end angle=360, radius = 0.25cm];
	\draw (6.5,0)[black, thick] arc
	[start angle=0, end angle=360, radius = 0.25cm];
	
	\draw[dashed] (-1,-2)--(-1,2);
	\draw[dashed] (4.6,-1.5)--(4.6,1.5);
	\draw[dashed] (7,-2)--(7,2);
	
	\draw[latex-latex] (-1, 2)--(7,2);
	\draw(3, 2)node[above]{$\eta^+_{a,c}$};
	\draw[latex-latex] (-1, -2)--(7,-2);
	\draw(3, -2)node[below]{$\eta^-_{a,c}$};
	
	\draw[latex-latex] (-1, 1.5)--(4.6,1.5);
	\draw(1.5, 1.5)node[below]{$\eta^+_{a,b}$};
	\draw[latex-latex] (4.6, 1.5)--(7,1.5);
	\draw(5.8, 1.5)node[below]{$\eta^+_{b,c}$};
	
	\draw[latex-latex] (-1, -1.5)--(4.6,-1.5);
	\draw(1.5, -1.5)node[above]{$\eta^-_{a,b}$};
	\draw[latex-latex] (4.6, -1.5)--(7,-1.5);
	\draw(5.8, -1.5)node[above]{$\eta^-_{b,c}$};
	
	\draw[circle,fill](-1,0)circle[radius=1mm];
	\draw(-1,0)node[left]{$a$};
	\draw[circle,fill](4.6,0)circle[radius=1mm];
	\draw(4.6,0)node[above right]{$b$};
	\draw[circle,fill](7,0)circle[radius=1mm];
	\draw(7,0)node[right]{$c$};
\end{tikzpicture}
	\vspace{-5mm}
	\caption{... allowing to reason on this kind of picture. Different cases corresponding to the positions of $a,b,c$ must be checked.}
	\label{sfig:beads}
	\end{subfigure}
\caption{Idea of proof for Lemma \ref{lem:Darcs}.}
\label{fig:Darcs}
\end{figure}



\paragraph{Describing the minimal elements.}
Given the preceding points, the proof of the following Proposition is, with small caveats, essentially the same as in type $A_n$.
	
\begin{prop}\label{prop:shi_min_BCD}
	Let $R$ be a Shi region of type $T$, $T \in \{B_n, C_n, D_n\}$ of sign type $v= \sign(R)$.
	Let $(\pi, P)$ be the arc diagram of type $T$ associated to $v$.
	We define $m$ as:
	\[m_{\alpha} = \left\{\begin{aligned}
	\eta^T_{i,j} &\textrm{ if } v_{\alpha} = 0, +\\
	-(\eta^T_{i,j}+1) &\textrm{ if } v_{\alpha} = - \\
	\end{aligned}\right.\]
	where $i, j$ are given by Table \ref{fig:classical_root} depending on $\alpha$. Then $m$ is $R$'s minimal element.
\end{prop}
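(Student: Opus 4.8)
The plan is to reproduce, almost verbatim, the two-part argument that established Proposition \ref{prop:shi_min_A}, replacing the type-$A$ arc count by $\eta^T$ and Lemma \ref{lem:arcs} by its type-$D$ analogue Lemma \ref{lem:Darcs} where necessary. Concretely, I would first show that $m$ satisfies the Shi relations of Theorem \ref{thm:shi_relations}, and then show by strong induction that, among all vectors with the prescribed sign type $v$, it has componentwise minimal absolute values, which is exactly the characterization of $\min(R)$ in Proposition \ref{prop:min_shi}.

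For the first part, fix a relation $\alpha + \beta = \gamma$ with $\alpha, \beta, \gamma \in \Phi^+$. The key reduction is that, under the dictionary of Table \ref{fig:rootsNarcs}, such a relation always corresponds to a triple of integers $(a, e, b)$ of $\intint{-n, n}$ (ordered by the convention fixed for $T$, with $n$ and $-n$ incomparable in the $D_n$ case) such that $\alpha, \gamma, \beta$ are the arcs joining $a$ to $e$, $a$ to $b$, and $e$ to $b$ respectively. I would verify this correspondence by running through the possible shapes of a relation --- e.g. $(e_i - e_j) + (e_j \pm e_k)$, $(e_i - e_j) + (e_i + e_j) = 2e_i$, $(e_i - e_j) + e_j = e_i$ --- checking in each case that the shared endpoint becomes the middle value $e$. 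One caveat to record here is that a root may correspond to two centrally symmetric arcs; since the arc diagram produced by Theorem \ref{thm:ARR} is invariant under $x \mapsto -x$, the two choices give the same arc count, so $\eta^T$ is well defined on roots. Once the triple is identified, Lemma \ref{lem:arcs} (for $B_n, C_n$) or Lemma \ref{lem:Darcs} (for $D_n$) yields $\eta^T_{a,b} = \eta^T_{a,e} + \eta^T_{e,b} + \varepsilon_{\eta}$ with $\varepsilon_{\eta} \in \{0,1\}$, and interpreting $(v_{\alpha}, v_{\gamma}, v_{\beta})$ as an $A_2$ sign type reduces to exactly the six lines of Table \ref{fig:cases}; each line produces $\varepsilon_m \in \{0, 1\}$, so $m$ is a Shi vector.

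For the second part, I would again induct on the position-distance $|c - d|$ of the endpoints of the arc attached to a root, where $c, d$ are the positions read off from $\pi$. The base case (adjacent endpoints) forces $m_{\alpha} \in \{-1, 0, +1\}$, which is the smallest value compatible with $v_{\alpha}$; here I must separately treat the special positions introduced by $B_n$ (the central $0$) and $D_n$ (the incomparable pair $n, -n$). For the inductive step, choosing an intermediate endpoint $e$ with $\varepsilon_{\eta} = 0$ splits the arc into two strictly shorter arcs, and Table \ref{fig:cases_min} shows that the formula for $m_{\alpha}$ realizes the minimum absolute value compatible with the sign type, given that $m_{a,e}$ and $m_{e,b}$ are already minimal by the induction hypothesis.

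The main obstacle is the $D_n$ case. There the naive arc count fails on arcs touching $\pm n$, which is precisely why $\eta^D = \max(\eta^+, \eta^-)$ and Lemma \ref{lem:Darcs} are needed; I expect the real work to be confirming that the six-line case analyses of Tables \ref{fig:cases} and \ref{fig:cases_min} remain valid when the additive identity in play is the max-of-two-values identity of Lemma \ref{lem:Darcs} rather than the single identity of Lemma \ref{lem:arcs}, and that a splitting endpoint $e$ with $\varepsilon_{\eta} = 0$ can still be selected in the induction when $a, b, c$ straddle the incomparable positions $n, -n$. Verifying the exhaustiveness of the relation-to-triple dictionary across all of $B_n, C_n, D_n$ is routine but must be carried out in full for rigor.
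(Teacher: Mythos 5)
Your proposal matches the paper's approach exactly: the paper itself proves Proposition \ref{prop:shi_min_BCD} only by remarking that, given the root-to-arc dictionary of Table \ref{fig:rootsNarcs} and the replacement of Lemma \ref{lem:arcs} by Lemma \ref{lem:Darcs} in type $D_n$, the argument is ``with small caveats, essentially the same'' as the two-part proof of Proposition \ref{prop:shi_min_A}. You in fact spell out the caveats (well-definedness of $\eta^T$ on centrally symmetric arc pairs, the special positions $0$ and $\pm n$, and the compatibility of the $\max$ in $\eta^D$ with the six-case tables) more explicitly than the paper does.
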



%

As in type $A_n$, this construction together with the bijection from Theorem \ref{thm:ARR} give an explicit bijection between parking functions of type $T$ and the minimal elements of the Shi regions of type $T$.

\section{Final remarks}

Two main questions arise when considering this construction. Firstly, how to extend it to exceptional types? An analysis of this arc counting method prompts a formulation only in terms of roots: given an antichain $P$, $\eta_{\alpha}$ can be defined as the maximal possible number of occurences of elements of $P$ when writing $\alpha$ as a sum of positive roots. In the classical types this corresponds to the definition used in this paper, and it still makes sense in all crystallographic types. However, to prove that this definition verifies Lemma \ref{lem:arcs}, we have strongly used the permutation groups realizations in the classical types but no equivalent realizations exists in the exceptional cases. As of now, it appears that further examination of the root poset is needed in this context.

Secondly, for $m \in \NN_{>0}$, a $m$-Shi arrangement can be defined. Can this construction be extended to the $m$-Shi arrangements? In \cite{athanasiadis1999simple}, Athanasiadis \& Linusson extend their bijection the $m$-Shi regions. Due to the similar use of non-nesting arc diagrams, is seems that our description of the minimal elements can be extended to $m$-Shi arrangements, at least in type $A_n$.

\section*{Acknowledgments}
This work was impulsed in a working group with Antoine Abram, Nathan Chapelier-Laget and Elias Thouant, whom the author thanks. The author also wants to thank Hugo Mlodecki and Daniel Tamayo-Jiménez for many interesting discussions on this matter as well as renew his thanks to Nathan Chapelier-Laget for providing many references and writing advice.

\printbibliography

\end{document}